\def\@strippedMR{} \def\@scanforMR#1#2#3\endscan{%
  \ifx#1M\ifx#2R\def\@strippedMR{#3}%
  \else\def\@strippedMR{#1#2#3}%
  \fi\fi} \renewcommand\MR[1]{\relax \ifhmode\unskip\spacefactor3000
  \space\fi \@scanforMR#1\endscan
  MR\MRhref{\@strippedMR}{\@strippedMR}} \makeatother
\newcommand{\R}{\mathbb{R}} \newcommand{\Z}{\mathbb{Z}}
\newcommand{\T}{\mathbb{T}} \newcommand{\C}{\mathbb{C}}
\theoremstyle{plain} \newtheorem{theorem}{Theorem}[section]
\newtheorem{lemma}[theorem]{Lemma}
\newtheorem{coro}[theorem]{Corollary}
\newtheorem{prop}[theorem]{Proposition}
\theoremstyle{definition} \newtheorem{definition}[theorem]{Definition}
\theoremstyle{remark} 
\DeclareMathOperator{\supp}{supp}
\newcommand{\ls}{\lesssim}
\begin{document}

\title[Strichartz estimates in $4d$ and applications]{Strichartz
  estimates for partially periodic solutions to Schr\"odinger
  equations in $4d$ and applications}
\author[S.~Herr]{Sebastian~Herr}\author[D.~Tataru]{Daniel~Tataru}
\author[N.~Tzvetkov]{Nikolay~Tzvetkov}

\subjclass[2000]{35Q55}

\address{Mathematisches Institut, Universit\"at Bonn, Endenicher Allee
  60, 53115 Bonn, Germany} \email{herr@math.uni-bonn.de}
\address{Department of Mathematics, University of California,
  Berkeley, CA 94720-3840, USA} \email{tataru@math.berkeley.edu}
\address{D\'epartement de Math\'ematiques, Universit\'e de
  Cergy-Pontoise, 2, avenue Adolphe Chauvin, 95302 Cergy-Pontoise
  Cedex, France} \email{nikolay.tzvetkov@u-cergy.fr}

\begin{abstract} 
  We consider the energy critical nonlinear Schr\"odinger equation on
  periodic domains of the form $\R^m\times \T^{4-m}$ with $m =
  0,1,2,3$.  Assuming that a certain $L^4$ Strichartz estimate holds
  for solutions to the corresponding linear Schr\"odinger equation, we
  prove that the nonlinear problem is locally well-posed in the energy
  space.  Then we verify that the $L^4$ estimate holds if $m=2,3$,
  leaving open the cases $m = 0,1$.
 \end{abstract}
\keywords{Cubic Nonlinear Schr\"odinger Equation, global well-posedness, Strichartz estimate}
\maketitle
\section{Introduction and main results}\label{sect:intro-main}

The nonlinear Schr\"odinger equation (NLS) on a Riemannian
manifold $(M,g)$ has the form
\begin{equation}\label{eq:nlsgen}
  (i\partial_t+\Delta_g)u=\pm |u|^{p-1} u,\quad  u(0,x)=u_0(x),
\end{equation}
where the $\pm$ signs correspond to the defocusing, respectively the
focusing case. This admits a conserved energy functional, namely
\[
E(u) = \int_M \frac12 |\nabla u|^2 \pm \frac{1}{p+1} |u|^{p+1} dV
\]
which is also the Hamiltonian for this problem.

The problem \eqref{eq:nlsgen} is called energy critical when the
energy is invariant with respect the natural associated scaling.  This
corresponds to the exponent
\[
p = \frac{d+2}{d-2}, \qquad d \geq 3
\]
where $d$ is the dimension of $M$.  For energy critical problems it is
natural to study the local and global well-posedness for initial data
in the energy space $\dot H^1(M)$.

In $\R^d$, the fact that the energy critical NLS is locally well-posed
in the energy space is a straightforward consequence of the Strichartz
estimates for the linear Schr\"odinger equation, which are a
quantitative expression of the dispersive character of the
equation, see e.g. \cite{C03,KT98}. In the simplest form, the Strichartz estimates for $M= \R^d$
can be expressed in the form
\begin{equation}\label{st}
  \| P_{C} e^{it\Delta} u_0\|_{L^q (\R \times M)} \lesssim 
  \lambda^{(d+2)(\frac{1}{q_d} - \frac{1}q)}\|  P_{C} u_0\|_{L^2}, \quad
  q \geq q_d, \, C \in \mathcal R(\lambda),
\end{equation}
where $P_{C}$ stands for a (spatial) frequency projector to the cube $C$,
$\mathcal R(\lambda)$ is the family of all cubes of size $\lambda$,
and $q_d$ is the Strichartz exponent
\[
q_d = \frac{2(d+2)}d
\]
This is invariant with respect to frequency translations, which is a
consequence of the Galilean invariance.  We refer to the extreme case
$q = q_d$ as the sharp Strichartz estimate; all others follow from it
by Bernstein's inequality.

By contrast, in compact domains there is less dispersion as the linear
waves cannot spread toward spatial infinity. In particular, the sharp
Strichartz estimates are unlikely to hold; weaker forms of the
estimates may still hold locally in time, but seem to depend on
delicate spectral and geometric properties of the manifold. We refer
the reader to \cite{B93a,BGT04} for positive results in this
direction, and also to \cite{B93a,TT01} for counterexamples to
\eqref{st} in the case $q=q_d$ on
$M=\T^d$, $d=1,2$.

In our previous work \cite{HTT10a} we have considered the quintic
$H^1$ critical NLS on the three dimensional (rational) torus $\T^3$,
and proved that this problem is locally well-posed for initial data in
the energy space, and globally well-posed for small data. This was the first critical well-posedness result
for NLS evolving in a compact spatial domain.  Our goal in this
article is to extend the result of \cite{HTT10a} to the cubic energy
critical NLS in four dimensions, namely 
\begin{equation}\label{eq:nls}
  (i\partial_t+\Delta_g)u=\pm |u|^{2} u,\quad  u(0,x)=u_0(x),
\end{equation}
 for manifolds $M$ of the form $M =
\R^m\times\T^n$, $m+n=4$ (with the convention that in the case $m=0$,
$M=\T^4$ while for $m=4$, $M=\R^4$).

What helps in the case of the energy critical problem is that the the
sharp Strichartz estimates are not necessary; instead it suffices to
have some weaker nonsharp bounds, provided that they are still
consistent with scaling. Our starting point in this direction is the
work of Bourgain~\cite{B93a}, who, in the case of periodic domains $\T^n$,
proved some scale invariant Strichartz estimates. Precisely, Bourgain
showed that the bound \eqref{st} holds for $q > 4$ on $\T^3$ and for
$q \geq 4$ on $\T^4$. One key to both our earlier result in
\cite{HTT10a} and our present result is an extension of \eqref{st},
which is explained next.

For $1 \leq \mu \leq \lambda$ let $\mathcal{R}(\mu,\lambda)$ be the
collection of all rectangles in $\R^m\times\Z^n$ of which have one
side equal to $\mu$ and all other $d-1$ sides equal to
$\lambda$. More precisely, we define $R\in \mathcal{R}(\mu,\lambda)$ as
\begin{equation}\label{rlmu}
R= \{\xi \in \R^m\times\Z^n:\xi-\xi_0\in [-\lambda,\lambda]^d,
\, |a\cdot \xi
-c|\leq \mu \}
\end{equation}
for some $\xi_0,a\in \R^d$, $|a|=1$, $c\in \R$.  With these notations
we consider the following extension of the bound \eqref{st}:
\begin{equation}\label{st1}
  \| P_{R} e^{it\Delta} u_0\|_{L^q (\R \times M)} \lesssim 
  \lambda^{(d+2)(\frac{1}{q_d} - \frac{1}q)} \left(\frac{\mu}{\lambda}\right)^\delta
  \|  P_{R} u_0\|_{L^2}, \quad
  R \in \mathcal{R}(\mu,\lambda)
\end{equation}
Here $\delta > 0$ is a small constant.

In the three dimensional result in \cite{HTT10a} it was sufficient to
prove that \eqref{st1} with $p = 4$ holds for $q > 4$. In four
dimensions we have to deal with the more difficult case $p=4$, which
corresponds to the endpoint of Bourgain's result. Our first result is
a conditional result:

\begin{theorem}\label{thm1}
  Let $M=\R^m\times\T^n$, $0\leq m\leq 4$, $m+n=4$. Assume that the
  bound \eqref{st1} with $q = 4$ holds uniformly for all $1 \leq \mu
  \leq \lambda$ with some $\delta > 0$.  Then the Cauchy problem
  \eqref{eq:nls} is globally well-posed for small initial data $u_0\in
  H^s(M)$, $s\geq 1$.
\end{theorem}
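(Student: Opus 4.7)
The plan is to follow the scheme used in our three-dimensional paper \cite{HTT10a}, which is a fixed-point argument in critical function spaces built from $U^p$/$V^p$ atomic/variational spaces adapted to the linear Schr\"odinger flow. Specifically, I would introduce spaces $X^s(M)$ and $Y^s(M)$ defined via a Littlewood--Paley decomposition, with the dyadic pieces measured in $U^2_\Delta$ and $V^2_\Delta$ respectively, so that the linear flow acts continuously and the standard duality between Duhamel's formula and $V^2_\Delta$-atoms is available. In these spaces one has the embedding $X^s \hookrightarrow L^\infty_t H^s_x$ and the transference principle, by which \eqref{st1} upgrades to the corresponding estimate in $V^2_\Delta$ with the same loss $(\mu/\lambda)^\delta$.

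With this functional framework in place, the theorem reduces to the trilinear estimate
\begin{equation*}
  \Bigl\| \prod_{j=1}^{3} u_j^{*}\Bigr\|_{N^1} \lesssim \prod_{j=1}^{3} \|u_j\|_{X^1},
\end{equation*}
where $N^1$ is the dual-type space that absorbs the Duhamel integral, and each $u_j^{*}$ is either $u_j$ or $\bar u_j$. Once this estimate is established, the standard contraction argument in a small ball of $X^1$ yields, for sufficiently small data, a unique global solution, and persistence of regularity provides the statement for $s\ge 1$. The structure of this step is routine; the real content is concentrated in the trilinear bound.

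To prove the trilinear estimate, I would decompose each factor into Littlewood--Paley pieces at dyadic scales $N_1\ge N_2\ge N_3$, which by almost-orthogonality reduces matters to a bilinear bound of the form
\begin{equation*}
  \|P_{N_1} u_1 \cdot P_{N_2} u_2\|_{L^2_t L^2_x(I\times M)}
  \lesssim \Bigl(\tfrac{N_2}{N_1}\Bigr)^{\delta} N_2 \,\|P_{N_1} u_1\|_{V^2_\Delta}\,\|P_{N_2} u_2\|_{V^2_\Delta},
  \qquad N_2\le N_1,
\end{equation*}
together with a trivial $L^4 L^4 \cdot L^4 L^4$ Strichartz control of the remaining factor. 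The bilinear estimate is obtained by partitioning the higher-frequency ball of radius $N_1$ into strips $R\in\mathcal{R}(N_2,N_1)$ aligned with the lower frequency cube of size $N_2$; by Galilean invariance we may assume $P_{N_2} u_2$ is localized near the origin, and then an elementary convolution/support argument makes the pieces $P_R u_1\cdot P_{N_2} u_2$ essentially orthogonal in $L^2_{t,x}$. Applying the hypothesis \eqref{st1} with $q=4$ to each factor and summing over $R$ yields the factor $(N_2/N_1)^\delta$, which is exactly what is needed to beat the logarithmic divergence when $N_2$ and $N_3$ are summed against the critical weight $N_1^{-1}$.

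The main obstacle is this last summation: at the endpoint $q=4$ the naive $L^4L^4\cdot L^4L^4\cdot L^\infty H^1$ estimate fails logarithmically, and only the gain $(\mu/\lambda)^\delta$ produced by passing from cubes to rectangles via \eqref{st1} allows the high--low interactions to be summed. Everything else---local and global well-posedness, persistence of regularity, and the scattering-type consequences---then follows by arguments identical in form to those of \cite{HTT10a}, adapted to dimension four and to the cubic nonlinearity.
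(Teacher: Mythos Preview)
Your overall scheme---$U^p/V^p$-based spaces $X^s$, $Y^s$, duality for the Duhamel term, reduction to a bilinear $L^2$ bound, and then a fixed point exactly as in \cite{HTT10a}---matches the paper. The gap is in the bilinear step.

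If you partition the $N_1$-ball into strips $R\in\mathcal R(N_2,N_1)$ and apply \eqref{st1} with $q=4$ directly, each high-frequency strip contributes $\|P_R u_1\|_{L^4}\lesssim N_1^{1/2}(N_2/N_1)^\delta\|P_R u_1\|_{L^2}$, while $\|P_{N_2}u_2\|_{L^4}\lesssim N_2^{1/2}\|u_2\|_{L^2}$. Your spatial convolution/support orthogonality then gives only
\[
\|P_{N_1}u_1\,P_{N_2}u_2\|_{L^2}\ \lesssim\ (N_1N_2)^{1/2}\Bigl(\tfrac{N_2}{N_1}\Bigr)^{\delta}\|u_1\|\,\|u_2\|,
\]
not the claimed $N_2(N_2/N_1)^\delta$. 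This is off by a factor $(N_1/N_2)^{1/2}$, and when inserted into the trilinear sum it produces $(N_1/N_2)^{1/2-\delta}(N_1/N_3)^{1/2-\delta}$, which diverges rather than sums.

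Two ingredients are missing. First, before any strip decomposition, one must localize the high-frequency factor to a \emph{cube} $C$ of side $N_2$ by spatial almost-orthogonality; both factors then live in cubes of size $N_2$, so $L^4\times L^4$ already gives the correct main factor $N_2$ (this is the $U^4$ bound). Second, the extra gain $(N_2/N_1)^\delta$ does \emph{not} come from a spatial convolution argument. The paper decomposes the cube $C$ (centered at $\xi_0$, $|\xi_0|\sim N_1$) into much thinner strips $R_k$ of width $\nu=\max\{N_2^2/N_1,1\}$, orthogonal to $\xi_0$; the products $P_{R_k}u_1\cdot u_2$ are then almost orthogonal in \emph{time frequency}, because on each $R_k$ the value $|\xi|^2$ (hence $\tau$) is essentially constant at resolution $\nu^2|k|\gtrsim N_2^2$, which dominates the time-frequency spread of $u_2$. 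Applying \eqref{st1} at the finer scale $(\nu,N_2)$ yields $(\nu/N_2)^\delta\lesssim (N_2/N_1+1/N_2)^\delta$. The paper flags this time-frequency orthogonality as the crucial new input in four dimensions; your ``elementary convolution/support'' argument only captures spatial orthogonality and is not enough.
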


This result includes the existence of mild solutions which are unique
in a certain space, which form a continuous curve in $\dot H^1(M)$,
and depend Lipschitz-continuously on the initial data. We also have a
suitable large data result with a life span depending on the profile
of the initial datum and not only on its $\dot H^1(M)$ norm, see
e.g. \cite{HTT10a} for a precise statement.
 
The global in time nature of our result results from the energy
conservation which controls the $\dot H^1(M)$ norm, a quantity which is
critical with respect to the natural scaling $u(t,x)\rightarrow
\lambda u(\lambda^2 t, \lambda x)$ of solutions to the nonlinear
equation \eqref{eq:nlsgen}.
 
We refer to \cite{B93c,BGT04,GP} for well-posedness results of some four
dimensional NLS on compact spatial domains, in spaces of sub-critical
regularity.

We remark that even using the bound \eqref{st1} with $q = 4$, the
proof of Theorem~\ref{thm1} is not a foregone conclusion.  Instead it
requires a delicate functional space set-up based on $U^2$ and $V^2$
type spaces. In addition the almost orthogonality arguments with
respect to both spatial and time frequencies, used in the proof of
Proposition~\ref{thm:bil-str-ext} below, are crucial to make work our
approach in four dimensions (recall that in the sub-critical analysis
of \cite{B93a,B93c} one only uses almost orthogonality with respect to
the spatial frequency).

Given the above theorem, the interesting question becomes to verify
whether the bound \eqref{st1} holds with $q = 4$, which for convenience
we restate as 
\begin{equation}\label{strichartz}
  \| P_{R} e^{it\Delta} u_0\|_{L^4 (\R \times M)} \lesssim 
  \lambda^{\frac12} \left(\frac{\mu}{\lambda}\right)^\delta
  \|  P_{R} u_0\|_{L^2}, \quad
  R \in \mathcal{R}(\mu,\lambda)
\end{equation}
In this paper we will only verify \eqref{strichartz} in
some partially periodic situations. Here is the statement.
\begin{theorem}\label{thm2}
Let $M=\R^m\times\T^n$ with  $m+n=4$ and $n \leq 2$. 
Then  the
  bound \eqref{strichartz}  holds uniformly for all $1 \leq \mu
  \leq \lambda$ with some $\delta > 0$.
\end{theorem}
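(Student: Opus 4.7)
My plan is to derive \eqref{strichartz} from a bilinear $L^2$ Strichartz estimate, following the general scheme of \cite{B93a} and the three-dimensional argument in \cite{HTT10a}. By Plancherel, $\|P_R e^{it\Delta}u_0\|_{L^4}^2=\|(P_R e^{it\Delta}u_0)^2\|_{L^2}$, so I would tile the rectangle $R$ by essentially disjoint cubes $C_j$ of side $\mu$ and expand the square, producing a sum of bilinear terms $e^{it\Delta}f_j\cdot e^{it\Delta}f_k$ with $f_j=P_{C_j}u_0$. Grouping pairs by the dyadic size of their frequency separation $|\xi_j-\xi_k|\sim 2^\ell\mu$, the problem reduces to proving, for each $\ell$, a bilinear $L^2$ estimate with a quantitative gain in $2^\ell\mu/\lambda$, and then summing via almost-orthogonality in the output space-time frequencies.

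For the bilinear estimate itself, I would apply Plancherel in $(t,x_{\T^n})$, turning the $L^2$ norm into a sum over pairs $(\xi_1,\xi_2)\in(\Z^n)^2$ of expressions on $\R\times\R^m$ constrained by the temporal resonance condition coming from the dispersion relation. For each fixed value of $\xi_1+\xi_2$ (the output $\T^n$-frequency) and of $|\xi_1|^2+|\xi_2|^2$ (which, together with the Euclidean variable, pins down the output $t$-frequency), I would bound the resulting expression on $\R\times\R^m$ by the classical Euclidean bilinear Strichartz estimate, which exploits the transversality of the two paraboloid pieces at frequency separation $\sim 2^\ell\mu$.

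The remaining arithmetic input is the counting of lattice pairs $(\xi_1,\xi_2)\in(\Z^n)^2$ with prescribed sum and prescribed $|\xi_1|^2+|\xi_2|^2$. For $n=0$ there is no counting. For $n=1$ such a pair is determined by two linear conditions in $\Z^2$, giving at most $O(1)$ solutions. For $n=2$ the two conditions reduce to locating a lattice point on a circle in $\Z^2$, whose cardinality is $O(N^\eps)$ by the classical divisor bound. This $\eps$-loss is precisely the one responsible for Bourgain's endpoint loss in the $L^4$ Strichartz estimate on $\T^2$, and it is the main source of difficulty.

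The delicate point will be to ensure that this arithmetic loss is incurred only at the scale of a single cube $C_j$ (contributing $\mu^\eps$) rather than at the full scale $\lambda$ (which would contribute $\lambda^\eps$ and destroy the target bound). In that case it is safely absorbed by the separation gain $(2^\ell\mu/\lambda)^\sigma$ for some $\sigma>\eps$, and the dyadic sum over $\ell$ then converges with a net gain of $(\mu/\lambda)^\delta$ for some small $\delta>0$, as required. Careful book-keeping of the almost-orthogonality and of the scale at which the divisor bound is invoked is, I expect, the main obstacle.
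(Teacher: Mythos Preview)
Your approach differs substantially from the paper's, and the concern you flag at the end is a real obstruction that the paper sidesteps rather than resolves.

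The paper does not tile $R$ by $\mu$-cubes, does not invoke a bilinear Strichartz estimate, and does not use the divisor bound. Instead, after using Galilean invariance to center $R$ at the origin, it squares and applies Plancherel plus a single Cauchy--Schwarz to reduce \eqref{strichartz} to the sublevel-set bound
\[
\sup_{\tau,\xi}\bigl|\{\eta\in R:\ |\tau+|\eta|^2+|\xi-\eta|^2|\lesssim 1\}\bigr|\lesssim \lambda^{2-\delta''}\mu^{\delta''}.
\]
The gain in $\mu/\lambda$ then comes directly from the strip condition $|a\cdot\eta|\le\mu$ defining $R$, while the quadratic constraint is handled \emph{geometrically}: after a rotation in the $\R^m$ factor one isolates an annulus condition in two of the variables, at least one of which is continuous (since $m\ge 2$). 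For $n=1$ both annulus variables are continuous and the area is $\lesssim 1$; for $n=2$ one variable is in $\Z$ and the mixed $\R\times\Z$ annulus is handled by Lemma~\ref{lem:tt}, giving measure $\lesssim 1$ with no $\eps$-loss. This yields explicit exponents $\delta=1/4$ for $n=1$ and $\delta=1/12$ for $n=2$.

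In your scheme, by contrast, the lattice circle $\{\xi_1\in\Z^2:|\xi_1-\xi/2|^2=c\}$ arising from the two constraints has radius $\sim|\xi_1-\xi_2|\sim 2^\ell\mu$, not $\sim\mu$, so the divisor loss is $(2^\ell\mu)^\eps$ and can reach $\lambda^\eps$ for the largest separations. Worse, when the two $\mu$-cubes are separated purely in the $\Z^2$ directions, the $\R^2$ frequency supports of $f_j$ and $f_k$ are essentially coincident, and the Euclidean bilinear Strichartz estimate provides no transversality gain to absorb this loss. Thus the absorption mechanism you propose fails in exactly the regime where the arithmetic loss is largest. The paper's argument succeeds precisely because $m\ge 2$ guarantees a continuous variable in the annulus constraint, replacing lattice-point counting by a Lebesgue-measure bound; this is also why the cases $m=0,1$ remain open.
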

Let us remark that Theorems~\ref{thm1} and \ref{thm2} remain valid in
the case of $m$-dimensional irrational tori (replacing $\T^m$).

Unfortunately we are not able at this time to deal with the $4$
dimensional torus $\T^4$, where $\T=\R/2\pi \Z$.  The analysis of the
models with $n=1,2$ is easier than the case of $\T^4$ but they still
represent spatial domains of strongly non euclidean global geometry,
compared for instance with the asymptotically euclidean geometry.

We remark that in the case $M=\R^4$ the estimate \eqref{strichartz}
with $\delta=1/12$ is a simple
consequence of the $L^3$ Strichartz estimate and Bernstein's
inequality (in fact it can be shown to hold true with $\delta=\frac14$
by the strategy of proof presented in Section \ref{sect:mult-str}).  In the case $M=\T^4$ estimate \eqref{strichartz} with
$\delta=0$ is contained in the work of Bourgain \cite{B93a}.  There
are two possible approaches to obtain the improvement with
$\delta>0$. The first is to refine Bourgain's reasoning at $L^4$
level.  The second is to apply an argument similar to \cite{HTT10a}.
This would consist in first proving the scale invariant inequality 
\eqref{st} for some $p<4$ (such an inequality is
conjectured at page 119 of Bourgain's paper \cite{B93a}) and then
interpolate with a suitable inequality for $p$ a large even integer, a
situation where refinements sensible to finer scales are much easier
to be established.

We conclude this introduction by introducing some notations.

If $M=\R^{m}\times \T^{n}$ then we denote
$\widehat{M}:=\R^{m}\times\Z^{n}$.
For $f:M\to \C$ an integrable function, we define
the Fourier transform $\hat{f}$ of $f$ by
\[
\hat{f} (\xi):=(2\pi)^{-\frac{m+n}{2}}\int_{ \R^m \times [0,2\pi]^{n}}
e^{-ix\cdot \xi} f(x) \;dx,\; \xi \in \R^{m} \times\Z^{n}.
\]
Let $\psi\in C_0^\infty(-2,2)$ be
a non-negative, even function with $\psi(s)=1$ for $|s|\leq 1$. We use
$\psi$ to construct a partition of unity $(\psi_\lambda)$ on $L^2(M)$
as follows: For a dyadic number $\lambda\geq 1$ we define
\[
\psi_\lambda(\xi)=\psi(|\xi|/\lambda)-\psi(2|\xi|/\lambda), \quad
\text{for } \lambda\geq 2, \quad \psi_1(\xi)=\psi(|\xi|).
\]
We define the frequency localization operators $P_\lambda:L^2(M)\to
L^2(M)$ as the Fourier multiplier with symbol $\psi_\lambda$, and for
brevity we also write $u_\lambda:=P_\lambda u$. Moreover, we define
$P_{\leq \lambda}:=\sum_{1\leq \mu\leq \lambda}P_\mu$ (dyadic sum).
More generally, for any measurable set $S\subset \widehat{M}$ we
define the Fourier projection operator $P_S$ with symbol $\chi_S$,
where $\chi_S$ denotes the (sharp) characteristic function of $S$. Let
$s \in \R$.  We define the Sobolev space $H^s(M)$ as the space of all
$L^2(M)$-functions for which the norm
\[
\|f\|_{H^s(M)}:= \left(\sum_{\lambda \geq 1}\lambda^{2s} \|P_\lambda
  f\|_{L^2(M)}^2\right)^{\frac12}
\]
is finite (the summation runs over dyadic values of $\lambda$).
Throughout this paper we will mainly use the greek letters $\lambda$
and $\mu$ to denote dyadic numbers.
\section{Critical function spaces and proof of
  Theorem~\ref{thm1}}\label{sect:appl}
\noindent
Let $\mathcal{Z}$ be the set of finite partitions
$-\infty<t_0<t_1<\ldots<t_K\leq \infty$ of the real line. Let
$\chi_I:\R\to\R$ denote the sharp characteristic function of a set
$I\subset \R$. The following definitions are as in \cite[Section
2]{HHK09}, and \cite[Section 2]{HTT10a}.
\begin{definition}\label{def:u}
  Let $1\leq p <\infty$, and $H$ be a complex Hilbert space.  A
  $U^p$-atom is a piecewise defined function $a:\R \to H$,
  \begin{equation*}
    a=\sum_{k=1}^K\chi_{[t_{k-1},t_k)}\phi_{k-1}
  \end{equation*}
  where $\{t_k\}_{k=0}^K \in \mathcal{Z}$ and $\{\phi_k\}_{k=0}^{K-1}
  \subset H$ with $\sum_{k=0}^{K-1}\|\phi_k\|_{H}^p=1$.
 
  The atomic space $U^p(\R,H)$ consists of all functions $u: \R \to H$
  such that
  \begin{equation*}
    u=\sum_{j=1}^\infty \lambda_j a_j \; \text{ for } U^p\text{-atoms } a_j,\;
    \{\lambda_j\}\in \ell^1,
  \end{equation*}
  with norm
  \begin{equation*}
    \|u\|_{U^p}:=\inf \left\{\sum_{j=1}^\infty |\lambda_j|
      :\; u=\sum_{j=1}^\infty \lambda_j a_j,
      \,\lambda_j\in \C,\; a_j \text{ $U^p$-atom}\right\}.
  \end{equation*}
\end{definition}
\begin{definition}\label{def:v}
  Let $1\leq p<\infty$, and $H$ be a complex Hilbert space.
  \begin{enumerate}
  \item We define $V^p(\R,H)$ as the space of all functions $v:\R\to
    H$ such that
    \begin{equation*}
      \|v\|_{V^p}
      :=\sup_{\{t_k\}_{k=0}^K \in \mathcal{Z}} \left(\sum_{k=1}^{K}
        \|v(t_{k})-v(t_{k-1})\|_{H}^p\right)^{\frac{1}{p}}<+\infty,
    \end{equation*}
    where we use the convention $v(\infty)=0$.
  \item We denote the closed subspace of all right-continuous
    functions $v:\R\to H$ such that $\lim_{t\to -\infty}v(t)=0$ by
    $V^p_{rc}(\R,H)$.
  \end{enumerate}
\end{definition}
\begin{definition}\label{def:delta_norm}
  For $s \in \R$ we let $U^p_\Delta H^s $ resp. $V^p_\Delta H^s$ be
  the spaces of all functions $u:\R\to H^s(M)$ such that $t \mapsto
  e^{-it \Delta}u(t)$ is in $U^p(\R,H^s)$ resp. $V^p_{rc}(\R,H^s)$,
  with norms
  \begin{equation*}
    \| u\|_{U^p_\Delta H^s} = \| e^{-it \Delta} u\|_{U^p(\R,H^s)},
    \qquad 
    \| u\|_{V^p_\Delta H^s} = \| e^{-it \Delta} u\|_{V^p(\R,H^s)}.
  \end{equation*}
\end{definition}
As an example, we show that the $L^4$-estimate \eqref{strichartz} for
linear solutions has a straightforward extension to
$U^4_\Delta$-functions:
\begin{coro}\label{coro:l4u4}
  Assume that the bound \eqref{strichartz} holds. Then we also have
  \begin{equation}\label{eq:l4u4}
    \|P_R u\|_{L^4([0,1]\times M)}\ls
    \lambda^{\frac12-\delta}\mu^{\delta}\|u\|_{U^4_\Delta}.
  \end{equation}
\end{coro}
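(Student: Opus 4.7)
The plan is to reduce to a single $U^4$-atom using the atomic decomposition in Definition \ref{def:u}, and then exploit the fact that the $L^4_{t,x}$ norm splits cleanly over a partition of the time axis into disjoint intervals. This is essentially the standard transference mechanism for which the $U^p$ spaces are designed.

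First I would write $e^{-it\Delta}u=\sum_j \lambda_j a_j$, where each $a_j$ is a $U^4$-atom valued in $L^2(M)$ and $\sum_j|\lambda_j|$ is essentially $\|u\|_{U^4_\Delta}$, so that $u=\sum_j \lambda_j e^{it\Delta}a_j$. Since $P_R$ commutes with the free flow $e^{it\Delta}$, the triangle inequality in $L^4([0,1]\times M)$ reduces matters to proving the bound for $e^{it\Delta}a$ with $a=\sum_{k=1}^K \chi_{[t_{k-1},t_k)}\phi_{k-1}$ a single atom normalized by $\sum_{k=0}^{K-1}\|\phi_k\|_{L^2}^4=1$.

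Next, writing $e^{it\Delta}a=\sum_{k=1}^K \chi_{[t_{k-1},t_k)}e^{it\Delta}\phi_{k-1}$ and using the pairwise disjointness of the intervals $[t_{k-1},t_k)$, the fourth power of the integrand is pointwise a single term, which yields
\[
\|P_R e^{it\Delta}a\|_{L^4([0,1]\times M)}^4 \leq \sum_{k=1}^K \|P_R e^{it\Delta}\phi_{k-1}\|_{L^4(\R\times M)}^4.
\]
Applying the hypothesized linear bound \eqref{strichartz} to each summand and using $\|P_R \phi_{k-1}\|_{L^2}\le \|\phi_{k-1}\|_{L^2}$, the right-hand side is controlled by $\bigl(\lambda^{1/2-\delta}\mu^{\delta}\bigr)^4 \sum_k \|\phi_{k-1}\|_{L^2}^4 = \bigl(\lambda^{1/2-\delta}\mu^{\delta}\bigr)^4$. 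Taking fourth roots and summing in $j$ completes the argument.

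The only points to verify are essentially bookkeeping: $P_R$ commutes with $e^{it\Delta}$ as both are spatial Fourier multipliers, and restricting from $\R$ to $[0,1]$ only decreases the $L^4$ norm. There is no genuine obstacle here, since the step where disjoint time supports turn a fourth power of a sum into a sum of fourth powers is precisely the design feature of $U^p$-atoms that makes linear Strichartz bounds automatically upgrade to the $U^p_\Delta$ scale.
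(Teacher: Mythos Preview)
Your argument is correct and follows essentially the same route as the paper: reduce to a single $U^4_\Delta$-atom, use disjointness of the time intervals to split the $L^4$ norm into a sum of fourth powers, apply the linear bound \eqref{strichartz} on each piece, and use the atom normalization $\sum_k\|\phi_{k-1}\|_{L^2}^4=1$. The only cosmetic difference is that the paper works directly with the intervals $I_k=[0,1]\cap[t_{k-1},t_k)$ rather than first bounding on all of $\R$ and then restricting.
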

\begin{proof}
  Due to the atomic structure of $U^4$ it suffices to prove
  \eqref{eq:l4u4} for atoms, i.e. for piecewise solutions of the
  linear equation, i.e.
  \[
  a(t)=\sum_{k=1}^K \chi_{[t_{k-1},t_k)}(t)e^{it\Delta} \phi_{k-1},
  \text{ with } \sum_{k=1}^K\|\phi_{k-1}\|^4_{L^2}=1.
  \]
  For $I_k=[0,1]\cap [t_{k-1},t_k)$ we have
  \begin{align*}
    \|P_R a\|^4_{L^4([0,1]\times M)}=& \sum_{k=1}^K \|P_R e^{it\Delta}\phi_{k-1}\|^4_{L^4(I_k \times M)}\\
    \ls &\sum_{k=1}^K
    \lambda^{2-4\delta}\mu^{4\delta}\|\phi_{k-1}\|_{L^2}^4\\
    \ls &\lambda^{2-4\delta}\mu^{4\delta},
  \end{align*}
  which proves the claim.
\end{proof}
Similarly to \cite{HTT10a}, we define modifications of $U^2_\Delta$
and $V^2_\Delta$ which are better adapted to the finer localizations
we need to consider.  For $z\in \Z^4$ we define the cube
$C_z=z+[0,1)^4$, which induces a disjoint partition $\cup_{z \in
  \Z^4}C_z =\R^4$.  For a function $u: \R \to H^s(M)$ we consider for
every $z \in \Z^4$ the map \[Q_z(u):\R\to H^s(M),\; Q_z(u)(t)=P_{C_z}
u(t).\]
\begin{definition}\label{def:xs_ys}
  Let $s\in \R$ be given.
  \begin{enumerate}
  \item We define $X^{s}$ as the space of all functions $u: \R \to
    H^s(M)$ such that $Q_z(u)\in U^2_\Delta(\R,H^s(M))$ for every $z
    \in \Z^d$, and
    \begin{equation*}
      \|u\|_{X^{s}}:=\left(\sum_{z \in \Z^d}
        \|Q_z(u)\|_{U^2_\Delta(\R,H^s)}^2\right)^{\frac12}<+\infty.
    \end{equation*}
  \item We define $Y^{s}$ as the space of all functions $u:\R \to
    H^s(M)$ such that $Q_z(u)\in V^2_{\Delta}(\R,H^s(M))$ for every
    $z\in \Z^d$, and
    \begin{equation*}
      \|u\|_{Y^{s}}
      :=\left(\sum_{z\in \Z^d}\|Q_z(u)\|_{V^2_\Delta(\R,H^s)}^2\right)^{\frac12}<+\infty.
    \end{equation*}
  \end{enumerate}
\end{definition}
For a time interval $I\subset \R$ we also consider the restriction
spaces $X^s(I)$ and $Y^s(I)$, defined in the natural manner.  The next
statement results from the definition.
\begin{prop}\label{prop:x_in_y}
  The following embeddings are continuous:
  \[
  U^2_\Delta H^s \hookrightarrow X^s \hookrightarrow Y^s
  \hookrightarrow V^2_\Delta H^s.
  \]
\end{prop}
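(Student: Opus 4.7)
The plan is to verify the three embeddings individually; the middle one reduces to the classical $U^2 \hookrightarrow V^2$ embedding from \cite{HHK09} applied pointwise in $z$, while the outer two rest on the frequency-orthogonality identity
\[
\sum_{z \in \Z^d} \|P_{C_z} \phi\|_{H^s(M)}^2 = \|\phi\|_{H^s(M)}^2.
\]
This holds because the sets $\{C_z\cap \widehat M\}_{z\in\Z^d}$ partition $\widehat M$ (in each torus direction $C_z$ picks out a single lattice point), so $L^2$-orthogonality of $P_{C_z}$ is Plancherel; for general $s$, I would insert the dyadic blocks $P_\lambda$, which commute with $P_{C_z}$, and interchange the sums over $z$ and $\lambda$.

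For $U^2_\Delta H^s \hookrightarrow X^s$, I would test on a $U^2_\Delta H^s$-atom
\[
a(t) = \sum_{k=1}^{K} \chi_{[t_{k-1},t_k)}(t)\, e^{it\Delta} \phi_{k-1}, \qquad \sum_{k=1}^K \|\phi_{k-1}\|_{H^s}^2 = 1.
\]
Since $P_{C_z}$ commutes with $e^{it\Delta}$, $Q_z(a)$ has the same piecewise structure with $\phi_{k-1}$ replaced by $P_{C_z}\phi_{k-1}$, so after rescaling it is itself a $U^2_\Delta H^s$-atom with norm at most $\bigl(\sum_k \|P_{C_z}\phi_{k-1}\|_{H^s}^2\bigr)^{1/2}$. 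Summing over $z$, interchanging sums, and using the orthogonality identity give $\sum_z \|Q_z a\|_{U^2_\Delta H^s}^2 \leq 1$. For a general atomic representation $u = \sum_j \lambda_j a_j$, Minkowski's inequality in $\ell^2_z$ yields $\|u\|_{X^s} \leq \sum_j |\lambda_j|$; taking the infimum over decompositions concludes.

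For $X^s \hookrightarrow Y^s$, the inequality $\|v\|_{V^2(\R,H)} \lesssim \|v\|_{U^2(\R,H)}$ from \cite{HHK09}, applied with $v = e^{-it\Delta} Q_z u$ and $H = H^s(M)$ and then square-summed in $z$, suffices. For $Y^s \hookrightarrow V^2_\Delta H^s$, given any partition $\{t_k\}_{k=0}^K \in \mathcal{Z}$, I would apply the orthogonality identity to each increment $e^{-it_k\Delta} u(t_k) - e^{-it_{k-1}\Delta} u(t_{k-1})$ and swap summations, obtaining
\[
\sum_{k=1}^{K} \|e^{-it_k\Delta} u(t_k) - e^{-it_{k-1}\Delta} u(t_{k-1})\|_{H^s}^2 \leq \sum_z \|Q_z u\|_{V^2_\Delta H^s}^2 = \|u\|_{Y^s}^2,
\]
and then pass to the supremum over partitions.

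The only step with any actual content is the $H^s$-orthogonality of the cube projectors; everything else is routine manipulation of the atomic and variation definitions together with the fact that $P_{C_z}$ is a Fourier multiplier, so I do not anticipate a genuine obstacle.
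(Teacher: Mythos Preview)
Your argument is correct and is precisely the routine verification the paper has in mind: the authors give no proof at all, merely remarking that the proposition ``results from the definition,'' and your three steps (atom test plus $\ell^2_z$-orthogonality of the $P_{C_z}$, the classical $U^2\hookrightarrow V^2_{rc}$ embedding applied cubewise, and orthogonality of increments for the last embedding) are exactly what one writes down to justify that remark. One tiny point you might add for completeness is that membership in $V^2_{rc}$ (right-continuity and vanishing at $-\infty$) for $e^{-it\Delta}u$ follows from the corresponding property of each $Q_z u$ together with the $\ell^2_z$ summability, but this is immediate.
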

The motivation for the introduction of the $X^s$ and $Y^s$ spaces lies
in the following.
\begin{coro}\label{coro:part_prop}
  Let $\{S_k\}$ be a partition of $\R^d$ into measurable sets $S_k$
  with the property \[\sup_{z \in \Z^d } \#\{k: C_z\cap S_k
  \not=\emptyset \}<+\infty.\] Then
  \begin{equation*}
    \sum_k \| P_{S_k} u\|_{V^2_\Delta H^s}^2 \lesssim  \| u\|^2_{Y^s}.
  \end{equation*}
\end{coro}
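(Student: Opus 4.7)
The plan is to combine two elementary facts about $V^2$-type spaces: almost orthogonality with respect to disjoint frequency localizations, and monotonicity of the $V^2$-norm under restriction of Fourier support. The finite overlap condition on $\{S_k\}$ then converts a sum over $k$ into a sum over $z$ with bounded multiplicity.

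I would first form the common refinement $E_{k,z} := S_k \cap C_z$, so that for each fixed $k$ the sets $\{E_{k,z}\}_z$ are disjoint and $P_{S_k} u = \sum_z P_{E_{k,z}} u$. Using the orthogonality of Fourier projectors in $H^s(M)$, one checks directly from Definition~\ref{def:v} that for any $v : \R \to H^s(M)$ and any disjoint family $\{F_j\}$ in $\widehat{M}$ one has
\[
\left\|\sum_j P_{F_j} v\right\|_{V^2 H^s}^2 \leq \sum_j \|P_{F_j} v\|_{V^2 H^s}^2,
\]
since for any $\{t_k\} \in \mathcal{Z}$ the Pythagoras-type identity $\|\sum_j P_{F_j}(v(t_k) - v(t_{k-1}))\|_{H^s}^2 = \sum_j \|P_{F_j}(v(t_k) - v(t_{k-1}))\|_{H^s}^2$ holds pointwise in $k$, and summing first over $k$ and then taking the supremum passes from a supremum of a double sum to a sum of suprema. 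Since $e^{-it\Delta}$ commutes with $P_{F_j}$, the same bound transfers to $V^2_\Delta H^s$.

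Applied with $F_j = E_{k,z}$ for fixed $k$, this gives $\|P_{S_k} u\|_{V^2_\Delta H^s}^2 \leq \sum_z \|P_{E_{k,z}} u\|_{V^2_\Delta H^s}^2$. Because $E_{k,z} \subset C_z$, the monotonicity $\|P_{E_{k,z}} u\|_{V^2_\Delta H^s} \leq \|P_{C_z} u\|_{V^2_\Delta H^s}$ follows from an identical pointwise estimate inside the sup defining the norm. Summing over $k$, exchanging the two sums, and using the finite overlap hypothesis $N_0 := \sup_{z} \#\{k : C_z \cap S_k \neq \emptyset\} < \infty$ to bound the number of $k$ contributing at each $z$, one obtains
\[
\sum_k \|P_{S_k} u\|_{V^2_\Delta H^s}^2 \leq \sum_z \#\{k : E_{k,z} \neq \emptyset\}\, \|P_{C_z} u\|_{V^2_\Delta H^s}^2 \leq N_0 \sum_z \|P_{C_z} u\|_{V^2_\Delta H^s}^2 = N_0 \|u\|_{Y^s}^2.
\]

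No single step is technically difficult; the only point that requires a moment of care is the almost orthogonality step for $V^2$, which relies on the Hilbert-space orthogonality of disjoint Fourier projectors inside each term of the sum defining the $V^2$-norm, and on the fact that taking a supremum over a single common time partition and then decomposing by frequency only yields an inequality (not an equality) — which is exactly what the statement asks for.
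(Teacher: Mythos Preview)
Your argument is correct and is precisely the elaboration one expects: the paper does not give a proof of this corollary, treating it as an immediate consequence of the definition of $Y^s$ (the sentence preceding the corollary reads ``The motivation for the introduction of the $X^s$ and $Y^s$ spaces lies in the following''). Your two ingredients --- the subadditivity of $\|\cdot\|_{V^2_\Delta H^s}^2$ under orthogonal frequency decompositions and the monotonicity $\|P_{E}u\|_{V^2_\Delta H^s}\le \|P_{C}u\|_{V^2_\Delta H^s}$ for $E\subset C$ --- together with the finite overlap bookkeeping, are exactly what the authors have in mind.
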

Now, we are able to state and prove the key estimate.
\begin{prop}\label{thm:bil-str-ext}
  Under the hypothesis of Theorem~\ref{thm1}, there exists $\delta>0$
  such that for all dyadic $\lambda\geq\mu \geq 1$ it holds
  \begin{equation*}
    \|P_\lambda u_1 P_\mu u_2\|_{L^2([0,1] \times M )}
    \ls \mu \Big(\frac{\mu}{\lambda}+\frac1\mu\Big)^\delta \|P_\lambda u_1\|_{Y^0}\|P_\mu u_2\|_{Y^0}.
  \end{equation*}
\end{prop}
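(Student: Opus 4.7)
The plan is to prove the bilinear estimate in three steps: transfer from the $Y^0$ spaces to free Schrödinger evolutions, perform an almost-orthogonal frequency decomposition of the high-frequency factor, and finally apply Hölder together with the hypothesized linear Strichartz bound \eqref{strichartz}.

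For the transference, I combine the embedding chain $X^0 \hookrightarrow Y^0 \hookrightarrow V^2_\Delta H^0$ of Proposition~\ref{prop:x_in_y}, the well-known continuous embedding $V^2 \hookrightarrow U^p$ valid for any $p>2$ (which introduces only a logarithmic loss, easily absorbed by a slightly smaller $\delta$), and the atomic structure of $U^p$ from Definition~\ref{def:u}. Together with the almost-orthogonality provided by Corollary~\ref{coro:part_prop}, these reduce the problem to proving the analogous bilinear estimate for pairs of free solutions $u_j = e^{it\Delta}\phi_j$ with $\phi_1 = P_\lambda \phi_1$ and $\phi_2 = P_\mu \phi_2$, in the form of Corollary~\ref{coro:l4u4}.

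Next, I decompose the $\lambda$-annulus $\{|\xi|\sim\lambda\}$ into essentially disjoint cubes $\{C_\beta\}$ of side length $\mu$, so that $P_\lambda \phi_1 = \sum_\beta P_{C_\beta}\phi_1$. For free evolutions, each product $P_{C_\beta} u_1 \cdot P_\mu u_2$ has space--time Fourier support contained in
\[
\{(\tau_1+\tau_2,\xi_1+\xi_2) : \xi_1 \in C_\beta,\ |\xi_2|\sim\mu,\ \tau_j = |\xi_j|^2\},
\]
a thin neighbourhood of a shifted paraboloid. Using the curvature, these sets are essentially disjoint as $\beta$ varies, yielding the crucial almost-orthogonality
\[
\|P_\lambda u_1 \cdot P_\mu u_2\|_{L^2([0,1]\times M)}^2 \lesssim \sum_\beta \|P_{C_\beta} u_1 \cdot P_\mu u_2\|_{L^2([0,1]\times M)}^2.
\]
For each summand, Hölder in space--time and the hypothesis \eqref{strichartz} at the trivial scale give the baseline bound
\[
\|P_{C_\beta} u_1 \cdot P_\mu u_2\|_{L^2} \lesssim \|P_{C_\beta} u_1\|_{L^4} \|P_\mu u_2\|_{L^4} \lesssim \mu \|P_{C_\beta}\phi_1\|_{L^2}\|P_\mu\phi_2\|_{L^2},
\]
which already produces the factor $\mu$ after Cauchy--Schwarz in $\beta$. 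The sharper factor $(\mu/\lambda+1/\mu)^\delta$ is then obtained by refining the decomposition at a sub-scale $\mu_\star\le\mu$: either by further splitting $P_{C_\beta}\phi_1$ into rectangles in $\mathcal{R}(\mu_\star,\lambda)$, picking up a factor $(\mu_\star/\lambda)^\delta$ from \eqref{strichartz}, or by splitting $P_\mu \phi_2$ at unit scale using the discrete structure of $\Z^n$ in the periodic directions, giving the $(1/\mu)^\delta$ term. Choosing $\mu_\star$ to balance the two contributions yields the stated bound.

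The principal obstacle is the space--time almost-orthogonality in the decomposition step. The sub-critical analyses of~\cite{B93a,B93c} and even the three-dimensional argument in \cite{HTT10a} succeed using only almost orthogonality in the \emph{spatial} Fourier variable, but the four-dimensional $L^4$ endpoint is so tight that a parallel almost-orthogonality in the temporal/modulation variable becomes indispensable — exactly the novelty the authors stress in the introduction. The delicate point is to count rigorously how many pairs of cubes $(C_{\beta_1},C_{\beta_2})$ can give compatible $(\tau,\eta)$ outputs subject to the two paraboloid constraints $\tau_j = |\xi_j|^2$, and to confirm that this overlap count remains bounded uniformly even at the endpoint exponent.
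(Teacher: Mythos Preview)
Your outline contains the right ingredients but misplaces the key step, and as written the argument for the gain $(\mu/\lambda+1/\mu)^\delta$ does not close.

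The decomposition of the $\lambda$-annulus into $\mu$-cubes $C_\beta$ needs only \emph{spatial} almost orthogonality: the output frequency $\xi_1+\xi_2$ lies in $C_\beta+O(\mu)$, and these sets already have bounded overlap. This is exactly how the paper reduces (via Corollary~\ref{coro:part_prop}) to a single cube $C$ and a $V^2_\Delta$ bound. So your concluding worry about counting pairs $(C_{\beta_1},C_{\beta_2})$ with compatible $(\tau,\eta)$ is aimed at the wrong decomposition; nothing delicate happens there.

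The genuine difficulty is inside a fixed cube $C=C_\beta$. Your proposal to ``further split $P_{C_\beta}\phi_1$ into rectangles in $\mathcal{R}(\mu_\star,\lambda)$'' and read off $(\mu_\star/\lambda)^\delta$ does not work as stated. First, since $C$ already has side $\mu$, the relevant Strichartz gain is $(\mu_\star/\mu)^\delta$, not $(\mu_\star/\lambda)^\delta$. Second, and more seriously, you give no orthogonality among these finer pieces: a naive Cauchy--Schwarz over the $\sim \mu/\mu_\star$ strips costs a factor $(\mu/\mu_\star)^{1/2}$, which overwhelms the gain $(\mu_\star/\mu)^\delta$ for any $\delta<1/2$. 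The paper resolves this by choosing the strips $R_k$ \emph{orthogonal to the radial direction} $\xi_0/|\xi_0|$ with the specific width $\nu=\max\{\mu^2/\lambda,1\}$; then for $\xi_1\in R_k$ one has $|\xi_1|^2=\nu^2k^2+O(\nu^2|k|)$, so the products $P_{R_k}u_1\cdot u_2$ have essentially disjoint \emph{time} frequencies. This is precisely where the temporal almost orthogonality enters, and it is what lets one sum the $R_k$ without loss. The resulting gain is $(\nu/\mu)^\delta\le(\mu/\lambda+1/\mu)^\delta$; the $1/\mu$ term arises because $\nu\geq 1$, not from any unit-scale splitting of $\phi_2$.

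A smaller point: the passage from free solutions to $V^2_\Delta$ is handled in the paper not by the embedding $V^2\hookrightarrow U^p$ with an absorbed loss, but by proving the bound with gain in $U^2_\Delta$ and the bound without gain in $U^4_\Delta$, then interpolating via \cite[Proposition~2.20]{HHK09}. Your route can be made to work but requires more care than ``logarithmic loss absorbed by a smaller $\delta$''.
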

\begin{proof}[Proof of Proposition~\ref{thm:bil-str-ext}]
  Set $I=[0,1]$.  By almost orthogonality and Corollary
  \ref{coro:part_prop} we may restrict $P_\lambda u_1$ to a cube $C
  \in \mathcal{R}(\mu)$ of side-length $\mu$, and reduce the claim to
  the estimate
  \begin{equation*}
    \|P_\lambda P_C u_1 P_\mu u_2\|_{L^2(I\times M)}\ls \mu(\mu/\lambda+1/\mu)^{\delta}\|P_\lambda P_C u_1\|_{V^2_\Delta}\|P_\mu u_2\|_{V^2_\Delta},
  \end{equation*}
  which follows by interpolation \cite[Proposition 2.20]{HHK09} from
  the two estimates
  \begin{equation}\label{eq:red_u2}
    \|P_\lambda P_C u_1 P_\mu u_2\|_{L^2(I\times M)}\ls  \mu(\mu/\lambda+1/\mu)^{\delta}\|u_1\|_{U^2_\Delta}\|u_2\|_{U^2_\Delta},
  \end{equation}
  and
  \begin{equation}\label{eq:red_u4}
    \|P_\lambda P_C u_1 P_\mu u_2\|_{L^2(I\times M)}\ls \mu \|u_1\|_{U^4_\Delta}\|u_2\|_{U^4_\Delta},
  \end{equation}
  which we will prove next. Indeed, estimate \eqref{eq:red_u4} is
  immediately obtained from the $L^4$-bound \eqref{eq:l4u4} and the
  Cauchy-Schwarz inequality, because
  \begin{align*}
    \|P_\lambda P_C u_1 P_\mu u_2\|_{L^2(I\times M)}\leq & \|P_\lambda P_C u_1\|_{L^4(I\times M)}\| P_\mu u_2\|_{L^4(I\times M)}\\
    \lesssim & \mu^\frac12\|u_1\|_{U^4_\Delta}
    \mu^\frac12\|u_2\|_{U^4_\Delta},
  \end{align*}
  where we have used the fact that both $P_\lambda P_C u_1$ and $P_\mu
  u_2$ have frequency support in cubes of side-lengths proportional to
  $\mu$.

  It remains to prove the estimate \eqref{eq:red_u2}. By
  \eqref{eq:red_u4} it suffices to consider the case $\mu \ll
  \lambda$. Due to the atomic structure of $U^2$ (see also
  \cite[Proposition 2.19]{HHK09} for more details) it is enough to
  prove the corresponding estimate for solutions to the linear
  equation, i.e
  \begin{align*}
    \|P_C e^{it\Delta}\phi_1 e^{it\Delta}\phi_2\|_{L^2(I\times M)}\ls
    \mu(\mu/\lambda+1/\mu)^{\delta}\|\phi_1\|_{L^2}\|\phi_2\|_{L^2}.
  \end{align*}
  with initial data satisfying
  \[\supp \widehat{\phi}_1\subset \{|\xi|\approx \lambda\},
  \quad \supp \widehat{\phi}_2\subset \{|\xi|\approx \mu\}.\] We
  extend the functions to the real line. For this we consider a 
 Schwartz function $\psi$ which is frequency localized in $[-1,1]$
and which is nonzero on $I$, and we define
 \[
  u_j(t)=\psi(t)e^{it\Delta}\phi_j
\]
Then we will  prove instead the stronger bound
  \begin{align*}
    \| P_C u_1 u_2\|_{L^2(\R\times M)}\ls
    \mu(\mu/\lambda+1/\mu)^{\delta}\|\phi_1\|_{L^2}\|\phi_2\|_{L^2}.
  \end{align*}

  Let $C=(\xi_0+[-\mu,\mu]^4)\cap \widehat{M}$ for some $\xi_0\in
  \R^4$, $|\xi_0|\approx \lambda$. We decompose $C$ into almost
  disjoint strips $R_k$ of width $\nu=\max\{\mu^2/\lambda,1\}$, which
  are orthogonal to $\xi_0$, i.e.
  \[
  C=\bigcup_{k\in \Z: |k| \approx \lambda/\nu} R_k, \quad R_k=\{\xi\in
  C: |\xi\cdot a -\nu k-|\xi_0||\leq \nu \}, \, a:=\xi_0|\xi_0|^{-1}.
  \]
  Then $R_k\in \mathcal{R}(\nu,\mu)$, and we have
  \[
  P_Cu_{1}u_{2}=\sum_{k\in \Z: |k| \approx
    \lambda/\nu} P_{R_k}u_{1}u_{2}.
  \]
  We observe that there is $L^2(\R\times M)$ almost orthogonality in
  this decomposition. Indeed, for $ (\tau_1,\xi_1)  \in \supp \widehat{P_{R_k}u_{1}}$
 we have $\xi_1 \in  R_k$  and $|\tau_1+\xi_1^2| \leq 1$. Furthermore, 
we compute 
  \[
  |\xi_1|^2=\underbrace{(\xi_1\cdot a)^2}_{=\nu^2 k^2+O(\nu^2 k)
  }+\underbrace{|\xi_1-\xi_0|^2}_{=O(\mu^2)}-\underbrace{((\xi_1-\xi_0)\cdot
    a)^2}_{=O(\mu^2)}.
  \]
  In addition, we recall that $\mu^2\ls \nu^2|k|$, which implies
  \[
  |\tau_1+\nu^2k^2|=|\tau_1+|\xi_1|^2|+O(\nu^2 |k|)=O(\nu^2|k|).
  \]
  Therefore, the time frequency  of $P_{R_k}u_{1}$ is
  localized to the interval
  $[-\nu^2k^2-c\nu^2|k|,-\nu^2k^2+c\nu^2|k|]$. The second factor
  $u_{2}$ has time frequency $\tau_2$ localized the interval 
   $|\tau_2| \ls \mu^2\ls \nu^2|k|$. Hence 
\[
\tau_1+ \tau_2 \in [-\nu^2k^2-c\nu^2|k|,-\nu^2k^2+c\nu^2|k|]
\]
These intervals are essentially disjoint, therefore the functions
  $\{P_{R_k}u_{1}u_{2}\}_{k}$ are almost orthogonal
  in $L^2(\R\times M)$, i.e.
  \[
  \|P_Cu_{1}u_{2}\|_{L^2}^2\approx \sum_{k\in \Z:
    |k| \approx \lambda/\nu}
  \|P_{R_k}u_{1}u_{2}\|_{L^2}^2.
\] 
By
  Cauchy-Schwarz and Corollary \ref{coro:l4u4} we obtain
  \begin{align*}
    \|P_{R_k}u_{1}u_{2}\|_{L^2}\ls&\  \|P_{R_k}u_{1}\|_{L^4}\|u_{2}\|_{L^4}\\
    \ls&\ \mu^{\frac12} \Big(\frac{\nu}{\mu}\Big)^{\delta}
    \|P_{R_k}u_{1}\|_{U_\Delta^4}\mu^{\frac12}\|u_{2}\|_{U_\Delta^4}\\
    \ls&\ \mu(\mu/\lambda+1/\mu)^{\delta}
    \|P_{R_k}\phi_{1}\|_{L^2}\|\phi_2\|_{L^2},
  \end{align*}
  where in the last inequality we used the fact that $\nu\leq
  \mu^2/\lambda+1$. This finishes the proof of
  Proposition~\ref{thm:bil-str-ext}.
\end{proof}
In order to apply our estimates to the integral formulation of the NLS problem
\eqref{eq:nls} we summarize some properties of our spaces.  The
following Proposition follows directly from the atomic structure of
$U^2$.
\begin{prop}\label{prop:linear}
  Let $s\geq 0$, $0<T\leq \infty$ and $\phi \in H^s(M)$. Then, for the
  linear solution $u(t):=e^{it\Delta}\phi$ for $t\geq 0$ we have $u
  \in X^s([0,T))$ and
  \begin{equation}\label{eq:linear}
    \|u\|_{X^s([0,T))}\leq \|\phi\|_{H^s}.
  \end{equation}
\end{prop}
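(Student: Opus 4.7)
The plan is to exploit the key observation that, for a linear solution $u(t)=e^{it\Delta}\phi$, the map $t\mapsto e^{-it\Delta}u(t)$ is simply the constant function $\phi$. Since the spatial frequency projector $P_{C_z}$ commutes with $e^{it\Delta}$, we likewise have $e^{-it\Delta}Q_z(u)(t)=P_{C_z}\phi$ independently of $t$. Thus, to bound $\|Q_z(u)\|_{U^2_\Delta H^s}$ it is enough to represent a suitable extension of this constant function as a single $U^2$-atom.

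Concretely, I would exhibit the extension $\tilde{u}(t):=\chi_{[0,\infty)}(t)\,e^{it\Delta}\phi$ of $u$ from $[0,T)$ to $\R$; this is admissible for computing the restriction-space norm. With this choice, $e^{-it\Delta}Q_z(\tilde{u})(t)=\chi_{[0,\infty)}(t)P_{C_z}\phi$. Using the one-step partition $0<\infty$ in $\mathcal{Z}$ together with the normalized vector $P_{C_z}\phi/\|P_{C_z}\phi\|_{H^s}$, Definition~\ref{def:u} identifies this function as $\|P_{C_z}\phi\|_{H^s}$ times a single $U^2$-atom. Hence
\[
\|Q_z(\tilde{u})\|_{U^2_\Delta H^s}\;\leq\; \|P_{C_z}\phi\|_{H^s}.
\]

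Squaring and summing over $z\in\Z^4$ yields $\|\tilde{u}\|_{X^s}^2\leq \sum_{z}\|P_{C_z}\phi\|_{H^s}^2$, and the restriction-space definition then gives $\|u\|_{X^s([0,T))}^2\leq \sum_z\|P_{C_z}\phi\|_{H^s}^2$. To recognize the right-hand side as $\|\phi\|_{H^s}^2$, I would use the standard orthogonality of the unit-cube decomposition: since $\{C_z\}_{z\in\Z^4}$ is a disjoint partition of $\R^4$, Plancherel gives $\sum_z\|P_{C_z}g\|_{L^2}^2=\|g\|_{L^2}^2$ for any $g\in L^2(M)$. Combining this with the fact that each unit cube $C_z$ meets only $O(1)$ dyadic annuli $\{|\xi|\sim\lambda\}$, so that $\|P_{C_z}P_\lambda\phi\|_{H^s}\sim \lambda^s\|P_{C_z}P_\lambda\phi\|_{L^2}$, and summing over dyadic $\lambda$, recovers $\|\phi\|_{H^s}^2$ on the right.

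I do not anticipate any genuine obstacle here: the statement is essentially an unpacking of the atomic definition of $U^2$ via the fact that linear flows are constant after conjugation by $e^{-it\Delta}$. The only mildly delicate bookkeeping is the interplay between the unit-cube partition defining $X^s$ and the dyadic Littlewood--Paley decomposition defining the $H^s$ norm, but this is handled by the standard orthogonality sketched above.
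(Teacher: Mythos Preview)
Your proposal is correct and is precisely the unpacking of what the paper means by ``follows directly from the atomic structure of $U^2$''; the paper gives no further details. One small simplification: your final paragraph about unit cubes meeting $O(1)$ dyadic annuli is unnecessary, since interchanging the $z$ and $\lambda$ sums and using disjointness of the $C_z$ already gives the exact identity
\[
\sum_{z}\|P_{C_z}\phi\|_{H^s}^2
=\sum_{\lambda}\lambda^{2s}\sum_{z}\|P_{C_z}P_\lambda\phi\|_{L^2}^2
=\sum_{\lambda}\lambda^{2s}\|P_\lambda\phi\|_{L^2}^2
=\|\phi\|_{H^s}^2.
\]
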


Let $f\in L^1_{loc}([0,\infty);L^2(M))$ and define
\begin{equation}\label{eq:duhamel}
  \mathcal{I}(f)(t):=\int_{0}^t e^{i(t-s)\Delta} f(s) ds
\end{equation}
for $t \geq 0$ and $\mathcal{I}(f)(t)=0$ otherwise. We have the
following linear estimate for the Duhamel term.
\begin{prop}\label{prop:inhom_est}
  Let $s \geq 0$ and $T>0$. For $f \in L^1([0,T);H^s(M))$ we have
  $\mathcal{I}(f)\in X^s([0,T))$ and
  \begin{equation*}
    \|\mathcal{I}(f)\|_{X^s([0,T))}
    \leq \sup    \int_0^T \int_{M}f(t,x)\overline{v(t,x)} dxdt ,
  \end{equation*}
  where the supremum is taken over all $v \in Y^{-s}([0,T))$ with
  $\|v\|_{Y^{-s}}=1$.
\end{prop}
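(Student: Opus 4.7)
The plan is to combine the $(U^2_\Delta)^*=V^2_\Delta$ duality from \cite[Prop.~2.10]{HHK09} with the almost orthogonality built into $X^s$ and $Y^{-s}$ through the disjoint unit-cube decomposition $\{C_z\}_{z\in\Z^4}$ of frequency space. The strategy is to dualize $\mathcal{I}(f)$ cube by cube, assemble the cube-local test functions $v_z$ so obtained into a single $v\in Y^{-s}$ by exploiting the disjointness of their frequency supports, and then check that the pairing $\int \lb f,v\rb\,dt$ reproduces $\|\mathcal{I}(f)\|_{X^s}$.

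First I would note that $P_{C_z}$ commutes with $e^{it\Delta}$, so $Q_z\mathcal{I}(f)=\mathcal{I}(Q_zf)$ and
\[
\|\mathcal{I}(f)\|_{X^s([0,T))}^2=\sum_{z\in\Z^4}\|\mathcal{I}(Q_zf)\|_{U^2_\Delta H^s}^2.
\]
For each $z$ I apply the Duhamel-side duality
\[
\|\mathcal{I}(g)\|_{U^2_\Delta H^s}=\sup_{\|w\|_{V^2_\Delta H^{-s}}\le 1}\left|\int_0^T\lb g(t),w(t)\rb_{L^2}\,dt\right|,
\]
which is the Hilbert-valued version of $(U^2)^*=V^2$ recorded in \cite{HHK09}, to pick, for every prescribed $\eps>0$, a function $w_z$ with $\|w_z\|_{V^2_\Delta H^{-s}}\le 1$ and
\[
\|\mathcal{I}(Q_zf)\|_{U^2_\Delta H^s}\le \left|\int_0^T\lb Q_zf(t),w_z(t)\rb\,dt\right|+\eps\,2^{-|z|}.
\]
Since $P_{C_z}$ is self-adjoint and bounded on $V^2_\Delta H^{-s}$ (it commutes with $e^{it\Delta}$ and is a bounded Fourier multiplier on $H^{-s}$), I may replace $w_z$ by $v_z:=P_{C_z}w_z$ without altering the pairing with $Q_zf$; the new $v_z$ has frequency support inside $C_z$ and still satisfies $\|v_z\|_{V^2_\Delta H^{-s}}\le 1$.

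Next I would glue these together. Setting $a_z:=\|\mathcal{I}(Q_zf)\|_{U^2_\Delta H^s}/\|\mathcal{I}(f)\|_{X^s}$ (so $\sum_z a_z^2=1$) and $v:=\sum_z a_zv_z$, the disjointness of the cubes $C_z$ gives $Q_{z'}v=a_{z'}v_{z'}$, whence
\[
\|v\|_{Y^{-s}([0,T))}^2\le \sum_z a_z^2\|v_z\|_{V^2_\Delta H^{-s}}^2\le 1.
\]
Expanding the pairing and using the per-cube lower bounds from the previous paragraph yields
\[
\int_0^T\!\!\int_M f\,\overline{v}\,dxdt=\sum_z a_z\int_0^T\lb Q_zf,v_z\rb\,dt\ge \sum_z a_z\|\mathcal{I}(Q_zf)\|_{U^2_\Delta H^s}-O(\eps)=\|\mathcal{I}(f)\|_{X^s}-O(\eps),
\]
the $O(\eps)$ being controlled by $\eps\sum_{z\in\Z^4}2^{-|z|}<\infty$. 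Letting $\eps\to 0$ and taking the supremum over $\|v\|_{Y^{-s}([0,T))}=1$ gives the claimed inequality, while the finiteness of the right-hand side for $f\in L^1([0,T);H^s)$ ensures $\mathcal{I}(f)\in X^s([0,T))$.

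The only genuinely non-routine ingredient is the duality formula for $\|\mathcal{I}(\cdot)\|_{U^2_\Delta H^s}$ used in the second paragraph, which is the $H^s$-weighted restatement of \cite[Prop.~2.10]{HHK09} for the Duhamel operator; the remainder is pure bookkeeping driven by the orthogonality of the cubes $C_z$, and no substantive obstacle is anticipated.
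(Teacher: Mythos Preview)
The paper does not give an independent proof of this proposition; it simply records the statement and defers to \cite{HTT10a} for the details. Your argument---cube-by-cube duality via the $U^2_\Delta/V^2_\Delta$ pairing for the Duhamel operator, then reassembly using the $\ell^2$-orthogonality built into the definitions of $X^s$ and $Y^{-s}$---is exactly the proof carried out there, so your proposal is correct and matches the intended argument. One small point you should make explicit: after choosing each $w_z$ you must also rotate it by a unimodular constant so that $\int_0^T\langle Q_zf,v_z\rangle\,dt$ is real and nonnegative, otherwise the lower bound in your penultimate display does not follow from the absolute-value bound you obtained.
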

Details can be found in \cite{HTT10a}.  The following elementary
estimates will be used in the estimate of the nonlinear terms.
\begin{lemma}
  Let $\delta>0$. Then for every sequence
  $(c_\mu)$ and every $\lambda \geq  1$ one has
  \begin{equation}\label{dve}
    \Big|\sum_{\mu\lesssim\lambda}\Big(\frac{1}{\mu}+\frac{\mu}{\lambda}\Big)^{\delta}c_{\mu}\Big|^2
    \lesssim \sum_{\mu}|c_{\mu}|^2\,,
  \end{equation}
  where the sum runs over the dyadic values of $\mu$.
\end{lemma}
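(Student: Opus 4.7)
The plan is to reduce this to a direct Cauchy--Schwarz argument together with a geometric-series estimate on the weight. Writing $a_\mu := (1/\mu + \mu/\lambda)^\delta$, Cauchy--Schwarz gives
\[
\Big|\sum_{\mu\lesssim\lambda} a_\mu c_\mu\Big|^2 \leq \Big(\sum_{\mu\lesssim\lambda} a_\mu^{2}\Big)\Big(\sum_{\mu}|c_\mu|^2\Big),
\]
so the entire task is to prove that $\sum_{\mu\lesssim\lambda}a_\mu^{2}\lesssim 1$ uniformly in $\lambda$.

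To bound $\sum a_\mu^2$, I would split the dyadic range at $\mu \approx \sqrt{\lambda}$, observing that $1/\mu$ dominates $\mu/\lambda$ precisely when $\mu^2\leq\lambda$, and vice versa. Thus $a_\mu \approx \mu^{-\delta}$ for $\mu\leq\sqrt{\lambda}$ and $a_\mu \approx (\mu/\lambda)^\delta$ for $\sqrt{\lambda}\leq\mu\lesssim \lambda$.

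Each of the two resulting sums is geometric in the dyadic variable: on the low range the dyadic sum $\sum_{\mu\leq\sqrt\lambda}\mu^{-2\delta}$ is dominated by its first term (a constant), while on the high range the change of variable $\nu=\lambda/\mu$ turns $\sum_{\sqrt\lambda\leq\mu\lesssim\lambda}(\mu/\lambda)^{2\delta}$ into $\sum_{1\lesssim\nu\lesssim\sqrt\lambda}\nu^{-2\delta}$, again bounded by a constant independent of $\lambda$ since $\delta>0$. Combining yields $\sum a_\mu^2 \lesssim 1$ with a constant depending only on $\delta$, and the lemma follows.

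There is no real obstacle here; the only thing to be mindful of is that the sums run over dyadic $\mu$, which is exactly what makes the geometric-series bound independent of $\lambda$ (the same argument would fail for a sum over all integers).
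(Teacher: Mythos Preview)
Your argument is correct and is exactly the approach indicated in the paper, which simply says the bound follows from Cauchy--Schwarz in $\mu$; you have just supplied the details of why $\sum_{\mu\lesssim\lambda}(1/\mu+\mu/\lambda)^{2\delta}\lesssim 1$ uniformly in $\lambda$.
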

The proof of   \eqref{dve} follows by an application of the Cauchy-Schwarz
 inequality in $\mu$.

We now state the nonlinear estimate yielding Theorem~\ref{thm1}.
\begin{prop}\label{gi}
  Let $s\geq 1$ be fixed. Then, for $u_k\in X^s([0,1))$,
  $k=1,\ldots,5$, the estimate
  \begin{equation*}
    \Big\|\mathcal{I}(\prod_{k=1}^3
    \widetilde{u}_k)\Big\|_{X^s([0,1))}
    \ls \sum_{j=1}^3\|u_j\|_{X^s([0,1))}\prod_{\genfrac{}{}{0pt}{}{k=1}{k\not=j}}^3\|u_k\|_{X^1([0,1))},
  \end{equation*}
  holds true, where $\widetilde{u}_k$ denotes either $u_k$ or
  $\overline{u}_k$.
\end{prop}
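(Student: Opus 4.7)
My plan is to reduce the estimate to a quadrilinear form via duality, decompose dyadically, and apply the bilinear Strichartz estimate of Proposition~\ref{thm:bil-str-ext} to a well-chosen pairing of factors.

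\emph{Setup.} By Proposition~\ref{prop:inhom_est}, it suffices to bound
\[
\Big|\int_{0}^{1}\!\!\int_M \widetilde{u}_1\,\widetilde{u}_2\,\widetilde{u}_3\,\overline{v}\, dx\, dt\Big|
\]
by the desired trilinear expression times $\|v\|_{Y^{-s}([0,1))}$. Expand each factor dyadically as $u_{k,\lambda_k}:=P_{\lambda_k}u_k$ and $v_{\lambda_0}:=P_{\lambda_0}v$; by the convolution structure of the Fourier transform, only quadruples $(\lambda_0,\lambda_1,\lambda_2,\lambda_3)$ in which the two largest frequencies are comparable contribute. Symmetrizing over $u_1,u_2,u_3$, I assume $\lambda_1\geq\lambda_2\geq\lambda_3$, so that $\lambda_0\lesssim\lambda_1$.

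\emph{Bilinear step and summation.} For each dyadic quadruple, apply Cauchy--Schwarz in $L^2_{t,x}$ to regroup the four factors into two pairs, and bound the $L^2$-norm of each pair by Proposition~\ref{thm:bil-str-ext} together with the embedding $X^s\hookrightarrow Y^s$ of Proposition~\ref{prop:x_in_y}. The pairing is chosen so as to place the derivative burden on the highest-frequency factor $u_{1,\lambda_1}$ and simultaneously maximize the bilinear $\delta$-gain. A case analysis on the position of $\lambda_0$ relative to $\lambda_2$ and $\lambda_3$ handles this: if $\lambda_0\gtrsim\lambda_2$, pair $(v_{\lambda_0},u_{2,\lambda_2})$ with $(u_{1,\lambda_1},u_{3,\lambda_3})$; if $\lambda_3\lesssim\lambda_0\lesssim\lambda_2$, pair $(v_{\lambda_0},u_{3,\lambda_3})$ with $(u_{1,\lambda_1},u_{2,\lambda_2})$; and if $\lambda_0\lesssim\lambda_3$ (in which regime $\lambda_1\sim\lambda_2$ by frequency conservation), pair $(v_{\lambda_0},u_{1,\lambda_1})$ with $(u_{2,\lambda_2},u_{3,\lambda_3})$. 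Using the $H^s$-weighting to rewrite $\|v_{\lambda_0}\|_{Y^0}$ as $\lambda_0^{s}\|v_{\lambda_0}\|_{Y^{-s}}$, $\|u_{1,\lambda_1}\|_{Y^0}$ as $\lambda_1^{-s}\|u_{1,\lambda_1}\|_{X^s}$, and $\|u_{k,\lambda_k}\|_{Y^0}$ as $\lambda_k^{-1}\|u_{k,\lambda_k}\|_{X^1}$ for $k=2,3$, one verifies in each case that the ratio of the resulting dyadic bound to the natural target $\|u_{1,\lambda_1}\|_{X^s}\|u_{2,\lambda_2}\|_{X^1}\|u_{3,\lambda_3}\|_{X^1}\|v_{\lambda_0}\|_{Y^{-s}}$ is bounded by a nonnegative power of $\lambda_0/\lambda_1\leq 1$ times the product of the two bilinear gain factors of the form $((\mu/\lambda)+1/\mu)^\delta$. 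Summation over the four dyadic scales then proceeds by iterated Cauchy--Schwarz applied to the $\ell^2$-structure of the $X^s$ and $Y^{-s}$ norms, with the Schur-type inequality \eqref{dve} absorbing the gain factors one scale at a time.

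\emph{Main obstacle.} The delicate regime is the high--high--low--low interaction $\lambda_0\sim\lambda_1\gg\lambda_2\sim\lambda_3$: here both gain factors reduce to $(\lambda_2/\lambda_1+1/\lambda_2)^\delta$, and the ratio $\lambda_2/\lambda_1$ alone gives no decay as $\lambda_2$ approaches $\lambda_1$. Summability over large $\lambda_2$ at the critical endpoint $s=1$ therefore rests on the $1/\lambda_2$ contribution, which is precisely what the strict $\delta$-improvement of Proposition~\ref{thm:bil-str-ext} over the plain $L^4$-Strichartz estimate provides; with $\delta=0$ the dyadic sum would diverge logarithmically, while for $\delta>0$ it closes comfortably via \eqref{dve}. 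All other frequency regimes yield at least one genuinely small ratio factor and present no difficulty.
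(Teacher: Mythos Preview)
Your proposal is correct and follows essentially the same strategy as the paper: reduce via Proposition~\ref{prop:inhom_est} to a quadrilinear estimate, decompose dyadically, pair the four factors and apply Proposition~\ref{thm:bil-str-ext} to each pair, then sum using \eqref{dve} and Cauchy--Schwarz. The paper organizes the argument slightly more economically, keeping a single fixed pairing $(P_{\lambda_1}u_1,P_{\lambda_3}u_3)$ and $(P_{\lambda_0}u_0,P_{\lambda_2}u_2)$ throughout and distinguishing only the two cases $\lambda_0\sim\lambda_1$ versus $\lambda_1\sim\lambda_2$, whereas you vary the pairing across three cases depending on the position of $\lambda_0$; both routes close in the same way, and your identification of the $\delta>0$ gain as the device that prevents a logarithmic divergence in the low-frequency summations is exactly the point.
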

\begin{proof} Set $I=[0,1)$.  Proposition \ref{prop:inhom_est} implies
  that we need to prove the multilinear estimate
  \begin{equation}\label{6lin}
    \Big| \int_{I \times M} \prod_{k=0}^4 \tilde u_k \ dx dt\Big|
    \lesssim  \| u_0\|_{Y^{-s}(I)}  \sum_{j=1}^3\Big(\|u_j\|_{X^s(I)}\prod_{\genfrac{}{}{0pt}{}{k=1}{k\not=j}}^3\|u_k\|_{X^1(I)}\Big)
  \end{equation}
  We dyadically decompose
  \[
  \widetilde{u}_k=\sum_{\lambda_k\geq 1} P_{\lambda_k} \widetilde{u}_k.
  \]
  In order for the integral in \eqref{6lin} to be nontrivial, the two
  highest frequencies must be comparable. By the
  Cauchy-Schwarz inequality and symmetry it suffices to show that
  \begin{equation}\label{6lina}
    \begin{split}
      S&= \sum_{ \Lambda }\|P_{\lambda_1}\widetilde{u}_1P_{\lambda_3}\widetilde{u}_3\|_{L^2}\|P_{\lambda_0}\widetilde{u}_0P_{\lambda_2}\widetilde{u}_2\|_{L^2}
      \\& \lesssim \|
      u_0\|_{Y^{-s}(I)}\|u_1\|_{X^s(I)}\|u_2\|_{X^1(I)}\|u_3\|_{X^1(I)},
    \end{split}
  \end{equation}
  where $\Lambda$ is as the set of all $4$-tuples
  $(\lambda_0,\lambda_1,\lambda_2,\lambda_3)$ of dyadic numbers $\geq 1$ satisfying
  \[
  \lambda_3\lesssim  \lambda_2 \lesssim \lambda_1 \quad \lambda_1 \sim \max \{\lambda_0,\lambda_2\}
  \]
  If $\lambda_1\sim \lambda_0$ then applying Proposition~\ref{thm:bil-str-ext}
  gives
  \begin{multline*}
    S\lesssim \sum_{ \Lambda} 
    \Big(\frac{\lambda_3}{\lambda_1}+\frac{1}{\lambda_3}\Big)^\delta
    \Big(\frac{\lambda_2}{\lambda_0}+\frac{1}{\lambda_2}\Big)^\delta
    \\
    \lambda_0^{-s}\| u_0\|_{Y^{0}(I)} \lambda_{1}^{s}\|u_1\|_{X^0(I)}
    \lambda_{2}\|u_2\|_{X^0(I)}\lambda_{3}\|u_3\|_{X^0(I)}\,.
  \end{multline*}
  Now we use \eqref{dve} to first sum in $\lambda_2$ and $\lambda_3$ and then
  Cauchy-Schwarz to sum with respect to $\lambda_0 \sim \lambda_1$ which yields
  \eqref{6lina}.

  If $\lambda_1 \sim \lambda_2$ then $\lambda_0 \lesssim \lambda_2$ therefore applying
  Proposition~\ref{thm:bil-str-ext} gives
  \begin{multline*}
    S\lesssim \sum_{ \Lambda} \Big( \frac{\lambda_0}{\lambda_1}\Big)^{s+1}
    \Big(\frac{\lambda_3}{\lambda_1}+\frac{1}{\lambda_3}\Big)^\delta
    \Big(\frac{\lambda_0}{\lambda_2}+\frac{1}{\lambda_0}\Big)^\delta
    \\
    \lambda_0^{-s}\| u_0\|_{Y^{0}(I)} \lambda_{1}^{s}\|u_1\|_{X^0(I)}
    \lambda_{2}\|u_2\|_{X^0(I)}\lambda_{3}\|u_3\|_{X^0(I)}\,.
  \end{multline*}
  Now we use \eqref{dve} to first sum in $\lambda_0$ and $\lambda_3$ and then
Cauchy-Schwarz   to sum with respect to $\lambda_2 \sim \lambda_1$ which again yields
\eqref{6lina}. This completes the proof of Proposition~\ref{gi}.
\end{proof}
With Proposition~\ref{prop:linear} and Proposition~\ref{gi} at our
disposal we can finish the proof of Theorem~\ref{thm1} exactly as in
\cite{HTT10a}.
\section{Proof of Theorem~\ref{thm2}}\label{sect:mult-str}
\noindent 
Before we turn to the proof of Theorem \ref{thm2} we discuss a lemma
which generalizes \cite[Lemma~2.1]{TT01}.
\begin{lemma}\label{lem:tt}
 Let $c \geq 0$, $d,e \in \R$. Then for all $k\geq 1$ we have
    \begin{equation}
      \label{eq:tt}\sup_{c,d,e \in \R} |\{(\xi,n)\in \R\times \Z: c\leq
      (\xi-d)^2+(n-e)^2\leq c+k\}| \ls k,
    \end{equation}
    where $|\cdot |$ denotes the product measure of the one
    dimensional Lebesgue and counting measure.
\end{lemma}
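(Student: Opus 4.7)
The approach is to slice by integer values of $n$. For fixed $c,d,e$ and $n \in \Z$, set $y = n-e$; the slice $\{\xi : c \leq (\xi-d)^2 + y^2 \leq c+k\}$ is either empty, an interval, or two disjoint intervals, with total Lebesgue measure
\[
g(y) := 2\bigl(\sqrt{(c+k-y^2)_+} - \sqrt{(c-y^2)_+}\bigr),
\]
so the left-hand side of \eqref{eq:tt} equals $\sum_{n \in \Z} g(n-e)$ and our target is to show this is $\lesssim k$. Two representations of $g$ will be used repeatedly: rationalizing gives $g(y) = \frac{2k}{\sqrt{c+k-y^2}+\sqrt{c-y^2}}$ on $\{y^2 \leq c\}$, while $g(y) = 2\sqrt{c+k-y^2}$ on $\{c < y^2 \leq c+k\}$. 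In particular $g(y) \leq 2\sqrt{k}$ everywhere on its support.

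I would then split on whether $c \leq k$ or $c > k$. If $c \leq k$, the support $\{y^2 \leq c+k\}$ is an interval of length $\leq 2\sqrt{2k}$, so it contains $O(\sqrt{k})$ integer values of $n-e$, each contributing $\leq 2\sqrt{k}$, yielding $\sum g(n-e) \lesssim k$ at once. If $c > k$, I would partition the $n$'s into three regions according to $(n-e)^2$: (I) $(n-e)^2 \leq c-k$, (II) $c-k < (n-e)^2 \leq c$, (III) $c < (n-e)^2 \leq c+k$. Regions II and III correspond to $|n-e|$ lying in intervals of lengths $\sqrt{c}-\sqrt{c-k} = \frac{k}{\sqrt{c}+\sqrt{c-k}}$ and $\sqrt{c+k}-\sqrt{c}=\frac{k}{\sqrt{c+k}+\sqrt{c}}$, both bounded by $\sqrt{k}$ since $c > k$; so each region contains $O(\sqrt{k})$ integers and each contributes $O(k)$ via the uniform bound $g \leq 2\sqrt{k}$.

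The main obstacle is Region I, the only region where the number of integer values can be as large as $\sqrt{c}$, potentially much bigger than $\sqrt{k}$. Here I would use the bound $g(y) \leq \frac{2k}{\sqrt{c-y^2}}$ (dropping the positive $\sqrt{c+k-y^2}$ from the denominator of the rationalized form). The function $y \mapsto \frac{2k}{\sqrt{c-y^2}}$ is even, monotone increasing on $[0,\sqrt{c-k}]$, and attains the value $2\sqrt{k}$ at the endpoint $y = \sqrt{c-k}$. A standard monotone integer-to-integral comparison on each half-line then gives
\[
\sum_{n:\,(n-e)^2 \leq c-k} \frac{2k}{\sqrt{c-(n-e)^2}} \;\leq\; 2\int_0^{\sqrt{c-k}} \frac{2k\,dy}{\sqrt{c-y^2}} + 4\sqrt{k} \;=\; 4k \arcsin\!\sqrt{1-k/c} + 4\sqrt{k} \;\leq\; 2\pi k + 4\sqrt{k},
\]
with the $4\sqrt{k}$ absorbing the endpoint terms of the comparison. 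Since $k \geq 1$, this is $\lesssim k$, which together with the bounds on Regions II and III and on the regime $c \leq k$ yields the claim uniformly in $c,d,e$. Morally, Region I contributes the bulk $\pi k$ corresponding to the continuous annulus area, and the integer discrepancy near the near-singularity at $y = \pm\sqrt{c}$ is cut off at a safe distance and absorbed in the $O(\sqrt{k})$ boundary correction.
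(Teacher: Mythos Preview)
Your proof is correct and follows essentially the same approach as the paper: slice by $n$, express the measure as a sum of $g(n-e)$, handle the bulk region by bounding $g(y)\le \frac{2k}{\sqrt{c-y^2}}$ and comparing the monotone sum to the $\arcsin$ integral, and treat the near-boundary integers by counting them and using the uniform bound $g\le 2\sqrt{k}$. The only cosmetic difference is that the paper first reduces to $k=1$ (covering the width-$k$ annulus by $O(k)$ unit annuli), which streamlines the case split and lets it merge your Regions I and II into a single sum integrated all the way to $\sqrt{c}$; your direct treatment of general $k$ with the three-region partition is equally valid.
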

\begin{proof}
  The estimate \eqref{eq:tt} generalizes  \cite[Lemma
  2.1]{TT01}, which implies the claim if $d=e=0$ or $d=0,\ e=1/2$.
  By translation invariance it suffices to consider $d=0$ and $e\in
  [0,1)$. It suffices to prove the result when $k = 1$.
We may also assume that $c\geq 1$, because otherwise the
  set is contained in a ball of radius $2$. Following
  \cite{TT01}, let us define
  \[
h(x)=|\{(\xi,n)\in \R\times \Z: \xi^2+(n-e)^2\leq x\}|, \quad
  x\geq 1.
\] 
This is given by 
  \[
h(x)=2\sum_{n=-\lfloor \sqrt{x}-e \rfloor}^{\lfloor \sqrt{x}+e
    \rfloor}\sqrt{x-(n-e)^2}.
\] 
We need to estimate the quantity
  \begin{align*}
  V =  &|\{(\xi,n)\in \R\times \Z: c\leq
    \xi^2+(n-e)^2\leq c+1\}| = h(c+1)-h(c).
  \end{align*}
 We split it into two, $V = 2(S_1+S_2)$, where
  \[
  S_1=\sum_{n \in A_1 } \sqrt{c+1-(n-e)^2}, 
\] 
respectively 
  \[
  S_2=\sum_{n \in A_2} \sqrt{c+1-(n-e)^2} -\sqrt{c-(n-e)^2},
\]
with
\[
A_1 = \{ n \in \Z;\
  c \leq (n-e)^2 \leq c+1\}, \quad A_2 = \{ n \in \Z;\
   (n-e)^2 \leq c \}
\]
The sum $S_1$ has at most two terms, both less than $1$.
 For $S_2$ we have
  \begin{align*}
    S_2\ls &\ 2+ \int_{-\sqrt{c}+e}^{ \sqrt{c}+e}\sqrt{c+1-(t-e)^2} -\sqrt{c-(t-e)^2}dt \\
    \ls& \ 2+  \int_{-\sqrt{c}+e}^{
      \sqrt{c}+e}\frac{1}{\sqrt{c-(t-e)^2}}dt\ls 1,
  \end{align*}
and the proof is complete.
\end{proof}
Let us now turn to the proof of Theorem~\ref{thm2}.
 It suffices to prove the estimate
  \[
\|v\|_{L^4([0,1]\times M)}\ls
  \lambda^{\frac12-\delta}\mu^{\delta}\|\phi\|_{L^2(M)}, \quad
  v(t)=\psi(t)P_R e^{it\Delta}\phi,
\]
for $R \in \mathcal R(\mu,\lambda)$. 
We take $R$ as in \eqref{rlmu}.
By Galilean invariance we can translate $R$ to the origin, i.e. set $\xi_0=0$
and $c=0$. Thus we work with 
\begin{equation}\label{newr}
R = \{ \xi \in \widehat M: |\xi|^2 \leq \lambda, \ |a \cdot \xi| \leq \mu \}.
\end{equation}
 The above bound  follows from
  \[
\|v^2\|_{L^2(\R\times M)}\ls
  \lambda^{1-\delta'}\mu^{\delta'}\|\phi\|_{L^2(M)}^2,
  \quad \delta'=2\delta>0.
\] 
  The Fourier transform of $v$ is
  \[
  \mathcal{F}v(\tau,\xi)=\widehat{\psi}(\tau+|\xi|^2)\widehat{\phi}(\xi).
  \]
  Then the Fourier transform of $v^2$ satisfies
  \[
|  \mathcal{F}(v^2)(\tau,\xi)| \lesssim 
\int_{A(\tau,\xi)}
  |\widehat{\phi}(\sigma,\eta)||\widehat{\phi}(\tau-\sigma,\xi-\eta)|
  d\sigma d\eta
  \]
  where the integration in $\eta$ is performed with respect to the product
  measure of the Lebesgue measure on $\R^m$ and counting measure on
  $\Z^n$.  The set $A(\tau,\xi)$ is defined as the set of all
  $(\sigma,\eta)\in\R\times \widehat{M}$ such that
  \[
  |\sigma+|\eta|^2|\lesssim 1,\  |\tau-\sigma+|\xi-\eta|^2| \lesssim 1,\  \eta \in R,\ 
 \xi-\eta \in R
  \]
  for $\widehat{M}=\R^{m}\times \Z^{n}$, $m+n=4$, $n=1$ or $n=2$.
  Plancherel's theorem and the Cauchy-Schwarz inequality imply
  \[
  \|v^2\|_{L^2(\R\times M)}^2\ls
  \sup_{\tau,\xi}|A(\tau,\xi)|\|\phi \|^2_{L^2(\R\times
    M)}.
  \]
  so the claim is reduced to the estimate
  \[
\sup_{\tau,\xi}|A(\tau,\xi)|\ls 
  \lambda^{2-\delta''}\mu^{\delta''}, \quad \delta''=2\delta'>0.
\]
For each $\eta \in R$, $\sigma$ must be inside an interval of length
at most $2$. Thus we obtain 
  \[
  |A(\tau,\xi)|\ls |B(\tau,\xi)|
  \]
where the set $B(\tau,\xi)$ is defined as
\[
B(\tau,\xi)=\{\eta \in R: |\tau+|\sigma|^2+|\xi-\eta|^2|\ls
  1\}.
\]

{\it Case i)} $m=3$, $n=1$, i.e. $\widehat{M}=\R^3\times \Z$. For $R$ we 
use the relations \eqref{newr}.  Our problem is invariant with respect to
rigid rotations in $\R^3$.  Hence without any restriction in generality we 
can assume that the vector $a$ in  \eqref{newr} satisfies $a_2 = a_3 = 0$.
Then we can write
\begin{multline*}
B(\tau,\xi)=
\{\eta \in \widehat M:\ 
|a_1 \eta_1+a_4 \eta_4|\leq \mu, \
 |\eta|\leq \lambda, \
 |\tau+|\xi|^2+|\xi-\eta|^2|\ls  1\}.
\end{multline*}
Then, we observe that
\[
\sup_{\tau,\xi}|B(\tau,\xi)|\leq  |I_1| \sup_{\tau',\xi}|I_2(\tau',\xi)|
\]
where
\begin{multline*}
I_1=\{(\eta_1,\eta_4)\in \R\times \Z:\ |a_1 \eta_1+a_4\eta_4|\leq \mu ,\ 
|\eta_1|\leq \lambda, |\eta_4| \leq \lambda\},
\end{multline*}
and
\begin{multline*}
I_2(\tau',\xi)=\{(\eta_2,\eta_3)\in \R^2:\ 
|\tau'+\eta_2^2+\eta_3^2+(\xi_2-\eta_2)^2+(\xi_3-\eta_3)^2 |\ls 1 \}.
\end{multline*}
Let us first estimate $|I_1|$. Recall that $a_1^2+ a_4^2=1$.  If
$a_1^2\geq 1/2$ then the number of possible $\eta_4$ is $\lesssim
\lambda$ and then for fixed $\eta_4$ the Lebesgue measure of the
possible $\eta_1$ is $\lesssim \mu$.  If $a_4^2\geq 1/2$ we make
the same reasoning by replacing the role of $\eta_4$ and $\eta_1$.
In any case, $|I_1|\ls \mu\lambda$.  We also observe that the
constraint on $\eta_2$ and $\eta_3$ in $I_2$
is equivalent to
\[
\left|\tau'+\frac12(\xi_2^2+ \xi_3^2)+ 2\Big(\eta_2-\frac{\xi_2}{2}\Big)^2+
2\Big(\eta_3-\frac{\xi_3}{2}\Big)^2\right|\ls 1,
\]
which implies that $I_2(\tau',\xi)$ is contained in a circle or
annulus with area $\ls 1$ independently of $\tau'$ and $\xi$.
Summarizing the above discussion yields
\[
\sup_{\tau,\xi}|A(\tau,\xi)|\ls  \mu \lambda. 
\]
This concludes the proof of \eqref{strichartz} with $\delta=1/4$
(which corresponds to $\delta''=1$).

  {\it Case ii)} $m=n=2$, i.e. $\widehat{M}=\R^2\times \Z^2$.
In this case our problem is invariant with respect to rigid rotations
in $\R^2$. Hence without any restriction in generality we can assume
that $a_2=0$. Then we have
\begin{multline*}
B(\tau,\xi)=
\{\eta \in \R^2\times \Z^2:\ 
|a_1 \eta_1+a_3\eta_3+a_4\eta_4|\leq \mu,\
 |\eta|\leq \lambda, 
 \\
 |\tau+|\eta|^2+|\xi-\eta|^2| \ls 1\}.
\end{multline*} {\it Subcase a)} $a_1\geq
\frac12(\mu/\lambda)^{\frac13}$. The change of variables
$(\eta_1,\eta_2,\eta_3,\eta_4) \to (\zeta_1,\eta_2,\eta_3,\eta_4)$,
given by
\[
\zeta_1 = a_1 \eta_1+a_3\eta_3+a_4\eta_4 
\]
yields
\[ 
\sup_{\tau,\xi}|B(\tau,\xi)|
\leq a_1^{-1} |I_1| \sup_{\tau',\zeta_2,\zeta_3}|I_2(\tau',\zeta_2,\zeta_3)|
\]
where
\begin{align*}
I_1=&\ \{(\zeta_1,\eta_4)\in \R\times \Z:\ |\zeta_1|\leq \mu, |\eta_4|\leq \lambda\},\\
I_2=&\ \{(\eta_2,\eta_3)\in \R\times \Z:\  |\tau'+2(\eta_2-\zeta_2)^2+2(1+a_3^2/a_1^2)(\eta_3-\zeta_3 )^2|\ls 1 \}.
\end{align*}
First, we obviously have $|I_1|\ls \mu\lambda$. Secondly, by dilating
the $\eta_2$ variable by a factor of  $(1+a_3^2/a_1^2)^{\frac12}$
the relation defining $I_2$ takes the form
\[
  |\tau'+2(1+a_3^2/a_1^2)((\eta_2-\zeta_2)^2+(\eta_3-\zeta_3 )^2)|\ls 1 
\]
Dividing this by $2(1+a_3^2/a_1^2)$ we are in a position to 
 apply Lemma~\ref{lem:tt} with $k=1$. Scaling back  we obtain
\[
|I_2| \lesssim (1+a_3^2/a_1^2)^{\frac12}
\]
Thus 
\[
\sup_{\tau,\xi}|B(\tau,\xi)|\ls   a_1^{-1} (1+a_3^2/a_1^2)^{\frac12}
 \mu \lambda \lesssim \lambda^{\frac53} \mu^\frac13.
\]

{\it Subcase b)} $a_1 \leq \frac12 (\mu/\lambda)^{\frac13}$. 
Then we write
\[
\sup_{\tau,\xi}|B(\tau,\xi)|
\leq  |I_1| \sup_{\tau',\zeta_1,\zeta_2}|I_2(\tau',\zeta_1,\zeta_2)|
\]
where 
\[
\begin{split}
  I_1=& \ \{ (\eta_3,\eta_4) \in \Z^2:\ |\eta_3| +|\eta_4| \lesssim
  \lambda, \ |a_3 \eta_3 + a_4 \eta_4|\lesssim \mu^\frac13
  \lambda^\frac23.\} \\
 I_2=& \ \{ (\eta_1,\eta_2) \in \R^2: (\eta_1-\zeta_1)^2+(\eta_2-\zeta_2)^2 |\ls 1 \}.
\end{split}
\]
Since in this case we have $a_3^2+a_4^2 \approx 1$, it follows that
$I_1$ is contained in a rectangle of size $\lambda \times
\mu^{\frac13}\lambda^{\frac23}$. Hence $|I_1| \lesssim
\mu^{\frac13}\lambda^{\frac53}$. On the other hand $I_2$ is a circle
or annulus in $\R^2$ of area $\ls 1$. All in all, we conclude
\[
\sup_{\tau,\xi}|B(\tau,\xi)|\ls\mu^{\frac13}\lambda^{\frac53}.
\]
The claim \eqref{strichartz} follows with $\delta=\delta''/4=1/12$.
This completes the proof of Theorem~\ref{thm2}.

\bibliographystyle{amsplain} \label{sect:refs}\bibliography{nls-refs}

\end{document}